\documentclass{article}
\usepackage[utf8]{inputenc}
\usepackage[T1]{fontenc}
\usepackage{lmodern}
\usepackage{microtype}
\usepackage[margin=1.35in]{geometry}
\usepackage{amssymb,amsmath,amsfonts,amsthm}
\usepackage{stmaryrd}
\usepackage[square,numbers]{natbib}
\usepackage[skip=4pt plus 1pt]{parskip}
\usepackage{authblk}
\usepackage[hidelinks]{hyperref}

\newtheorem{theorem}{Theorem}[section]
\newtheorem{lemma}[theorem]{Lemma}
\newtheorem{prop}[theorem]{Proposition}
\allowdisplaybreaks
\numberwithin{equation}{section}
\DeclareMathOperator{\gap}{gap}
\DeclareMathOperator{\Var}{Var}
\DeclareMathOperator{\sech}{sech}
\newcommand{\X}{\mathcal X}
\newcommand{\E}{\mathbb E}
\newcommand{\ind}{\mathbf 1}
\newcommand{\tv}{\mathrm{TV}}
\newcommand{\tmix}{t_{\mathrm{mix}}}
\newcommand{\pbar}{\overline{\pi_\beta}}
\newcommand{\Pbar}{\overline{P_\beta}}

\title{From torpid to rapid mixing: group averaging for a weakly interacting Ising star}
\author[1]{Michael C.H. Choi\thanks{Email: mchchoi@nus.edu.sg}}
\affil[1]{Department of Statistics and Data Science, National University of Singapore, Level 7, 6 Science Drive 2, 117546, Singapore}
\date{}

\begin{document}
\maketitle

\begin{abstract}
We study lazy single-site Metropolis dynamics $P_{\beta}$ at inverse temperature $\beta \geq 0$ on $\{-1,+1\}^d$ for an Ising star with additional signed interactions among the leaves. If the absolute row sums of the leaf-interaction matrix are at most $\kappa\le1/2$, the worst-case total-variation mixing time of $P_{\beta}$ is at least of order $d\exp\{c\beta(d-1)\}$ for $\beta\ge1$, with universal $c>0$. Averaging over global spin reversal reduces the mixing time to $\mathcal O(d^2(1+\beta))$ for both $GP_\beta G$ and $(P_\beta+G)/2$, where $G$ is the Gibbs kernel induced by the partition of the state space into spin-reversal orbits $\{-x,x\}$. Partition-function interpolation gives the lower bound for $P_\beta$. The upper bounds follow from Wu's Dobrushin inequality and a decomposition into projection and restriction chains. This gives an explicit example in which group averaging turns an exponentially slow chain into a polynomially fast one.\\
\textbf{Keywords}: Markov chains, Metropolis dynamics, Ising model, group averaging, projection chain, spectral gap, mixing time, Markov chain decomposition\\
\textbf{AMS 2020 subject classification}: 60J10, 60K35, 65C40, 82C20.
\end{abstract}

\section{Introduction and main results}\label{sec:intro}

We consider Ising stars with signed interactions among the leaves and show that averaging over global spin reversal reduces the worst-case total-variation mixing time from exponential to polynomial in the inverse temperature and dimension (i.e. the number of spins). The bounds hold uniformly over leaf interactions satisfying an absolute row-sum constraint.

Symmetry-based updates were studied through orbital Markov chains by \citet{Niepert}, and group-averaged kernels by \citet{ChoiWang, ChoiLimWang}. Projection chains are standard tools for analysing decomposable chains \cite{JSTV}. A related removal of a phase obstruction occurs for the Curie--Weiss model, where censored Glauber dynamics mixes rapidly at fixed inverse temperature above the critical value \cite{LLP,DLP}. For the perturbed Ising star, the projection representation and \citet[Theorem 2.1]{Wu} give bounds uniform in inverse temperature.

For integers $a\le b$, write $\llbracket a,b\rrbracket:=\{a,a+1,\ldots,b\}$. For a positive integer $d$, abbreviate $\llbracket d\rrbracket:=\llbracket1,d\rrbracket$.

Precisely, let $d\ge3$ and consider the state space
\[
\X=\{-1,+1\}^{d}.
\]
We write each configuration as $x=(x_0,x_1,\ldots,x_{d-1})$,
where $x_0$ is the spin at the centre of the star and
$x_i$, $i\in\llbracket d-1\rrbracket$, are the spins at its leaves. Let $K=(K_{ij})_{i,j\in\llbracket d-1\rrbracket}$ be symmetric, with $K_{ii}=0$, and suppose that
\begin{equation}\label{eq:kappa}
 \kappa:=\max_{i\in\llbracket d-1\rrbracket}\sum_{j\ne i}|K_{ij}|\le\frac12.
\end{equation}
Define the Hamiltonian and the target distribution by
\begin{equation}\label{eq:model}
 H(x)=-x_0\sum_{i=1}^{d-1} x_i-\sum_{1\le i<j\le d-1}K_{ij}x_ix_j,
 \qquad
 \pi_\beta(x)=\frac{e^{-\beta H(x)}}{\mathcal Z_\beta},
\end{equation}
where $\mathcal Z_\beta=\sum_{x\in\X}e^{-\beta H(x)}$ and $\beta\ge0$ is finite. Thus the interactions between the centre and the leaves have strength one. The graph of interactions among the leaves is arbitrary, subject to \eqref{eq:kappa}; these interactions may be ferromagnetic or antiferromagnetic.

For $i\in\llbracket0,d-1\rrbracket$, write $x^{(i)}$ for $x$ with spin $i$ reversed. The lazified single-site Metropolis kernel is
\begin{equation}\label{eq:P}
 P_\beta(x,y)=
 \begin{cases}
 \displaystyle\frac1{2d}\min\{1,e^{-\beta[H(y)-H(x)]}\},
       &y=x^{(i)}\text{ for some }i,\\[4pt]
 \displaystyle 1-\sum_{i=0}^{d-1}P_\beta(x,x^{(i)}),&y=x,\\
 0,&\text{otherwise}.
 \end{cases}
\end{equation}
It is irreducible, $\pi_\beta$-reversible, and satisfies $P_\beta(x,x)\ge1/2$, that is, $P_\beta$ is lazy.

Since $H(-x)=H(x)$, the global spin reversal $x\mapsto-x$ preserves $\pi_\beta$. Its orbits are the pairs $O_x=\{x,-x\}$. Let
\begin{equation}\label{eq:Gintro}
 R(x,y)=\ind_{\{y=-x\}},\qquad
 G=\frac12(I+R).
\end{equation}
The kernel $G$ draws uniformly from the current orbit and is independent of $\beta$. Both $GP_\beta G$ and $\frac12(P_\beta+G)$ preserve the original target distribution on $\X$ and are $\pi_{\beta}$-stationary.

For an ergodic kernel $T$ with stationary distribution $\mu$, define for $\varepsilon > 0$
\[
 \tmix(T,\varepsilon)
 :=\min\left\{t\in \mathbb{N} \cup \{0\}:
   \max_x\|T^t(x,\cdot)-\mu\|_{\tv}\le\varepsilon\right\}.
\]
Here $\|\eta-\mu\|_{\tv}:=\frac12\sum_x|\eta(x)-\mu(x)|$ is the total variation distance between $\eta$ and $\mu$. For reversible $T$, write $\gap(T)=1-\lambda_2(T)$, with eigenvalues ordered algebraically and $\lambda_2(T)$ being the second largest eigenvalue of $T$. Laziness of $P_\beta$ implies that it is positive semidefinite. Both averaged kernels $GP_\beta G$ and $\frac12(P_\beta+G)$ are also positive semidefinite, as verified in Lemma \ref{lem:averaging}. Thus all kernels used in our spectral mixing estimates have nonnegative eigenvalues.

\begin{theorem}\label{thm:main}
Under \eqref{eq:kappa}, the following statements hold for every $\beta\ge0$.
\begin{enumerate}
\item The original Metropolis dynamics satisfies
\begin{equation}\label{eq:slowgapmain}
 \gap(P_\beta)\le
 \frac1{d\cosh^{d-1}(\beta(1-\kappa))}.
\end{equation}
Consequently, for $0<\varepsilon<1/2$,
\begin{equation}\label{eq:slowmixmain}
 \tmix(P_\beta,\varepsilon)
 \ge\left[d\cosh^{d-1}(\beta(1-\kappa))-1\right]
       \log\frac1{2\varepsilon}.
\end{equation}
\item Let $\Pbar$ be the projection of $P_\beta$ onto the pairs $\{x,-x\}$ defined in \eqref{eq:generalprojection} and computed explicitly in Lemma \ref{lem:transitions} below. Then
\begin{align}
 \gap(GP_\beta G)
     &\ge\frac14\gap(\Pbar)
      \ge\frac{1-\kappa}{8d}\ge\frac1{16d},\label{eq:fastgapM}\\
 \gap(\tfrac12(P_\beta+G))
     &\ge\frac18\gap(\Pbar)
      \ge\frac{1-\kappa}{16d}\ge\frac1{32d}.\label{eq:fastgapA}
\end{align}
\item For $0<\varepsilon<1/2$,
\begin{align}
	\tmix(GP_\beta G,\varepsilon)
	&\le
	\left\lceil\frac{8d}{1-\kappa}
	\left(\log\frac1{2\varepsilon}
	+\frac{d\log2+\beta(2+\kappa)(d-1)}2\right)\right\rceil,
	\label{eq:fastmixM}\\
	\tmix\left(\frac12(P_\beta+G),\varepsilon\right)
	&\le
	\left\lceil\frac{16d}{1-\kappa}
	\left(\log\frac1{2\varepsilon}
	+\frac{d\log2+\beta(2+\kappa)(d-1)}2\right)\right\rceil.
	\label{eq:fastmixA}
\end{align}
\end{enumerate}
\end{theorem}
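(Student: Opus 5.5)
For part 1 I will use a bottleneck (conductance) argument with the set $A=\{x:x_0=+1\}$ and its complement, which have equal mass $1/2$ by spin-reversal symmetry. Cheeger's bound gives $\gap(P_\beta)\le 2\Phi(A)$, with $\Phi(A)=\sum_{x\in A,y\notin A}\pi_\beta(x)P_\beta(x,y)/\pi_\beta(A)$. The only moves leaving $A$ flip the centre, and by reversibility
\[
 \pi_\beta(x)P_\beta(x,x^{(0)})=\tfrac1{2d}\min\{\pi_\beta(x),\pi_\beta(x^{(0)})\}.
\]
So
\[
 \Phi(A)=\frac1d\sum_{x\in A}\min\{\pi_\beta(x),\pi_\beta(x^{(0)})\}.
\]
Write $s=\sum_{i\ge1}x_i$ and $W(x_{\ge1})=\exp\{\beta\sum_{i<j}K_{ij}x_ix_j\}$. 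Then $\min\{\pi_\beta(x),\pi_\beta(x^{(0)})\}=e^{-\beta|s|}W/\mathcal Z_\beta$, while $\mathcal Z_\beta=\sum_{x_{\ge1}}2\cosh(\beta s)W$. This gives
\[
 \Phi(A)=\frac1d\,\frac{\sum e^{-\beta|s|}W}{\sum 2\cosh(\beta s)W}.
\]
With the factor $2$ from Cheeger, the target $\gap\le 1/(d\cosh^{d-1}(\beta(1-\kappa)))$ requires
\[
 \frac{\sum e^{-\beta|s|}W}{\sum \cosh(\beta s)W}\le \cosh^{-(d-1)}(\beta(1-\kappa)).
\]
Alternatively I would use the test function $f(x)=x_0$ directly in the variational characterisation. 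Its Dirichlet form is exactly $\frac{2}{d}\sum\min(\cdot)$ and $\Var(f)=1$, which gives the same ratio without the factor loss. I prefer this route.

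The key obstacle is the ratio bound, which I plan to handle by partition-function interpolation. Introduce $\mathcal Z(t)=\sum_{x_{\ge1}}2\cosh(\beta s)W_t$ with $W_t$ using $tK$. Compare the numerator $N=\sum e^{-\beta|s|}W\le\sum W$ against the denominator. For the denominator, I would show $\log\sum\cosh(\beta s)W-\log\sum W\ge(d-1)\log\cosh(\beta(1-\kappa))$ by differentiating in $\beta$ the centre coupling (a second parameter $\gamma$ replacing $\beta$ in $\cosh(\gamma s)$). The derivative is $\E_\gamma[s\tanh(\gamma s)]$. In the tilted measure each leaf sees the field $\gamma x_0$ (with $x_0$ resampled) plus leaf interactions of total strength at most $\beta\kappa$. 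Hence conditional magnetisations satisfy $\E[x_0x_i\mid\cdot]\ge\tanh(\gamma-\beta\kappa)$ in the appropriate sense. Integrating from $0$ to $\beta$ produces $\cosh(\beta(1-\kappa))^{d-1}$, possibly after rescaling $\gamma=\beta u$ and bounding $\tanh(\beta u-\beta\kappa u)$. The numerator is bounded analogously using $e^{-\beta|s|}\le1$ and $\sum W\le$ the $\gamma=0$ partition function. This interpolation is where I expect the real work to lie, especially controlling signs of $K$ via the row-sum bound. The mixing lower bound then follows from the standard $\tmix(\varepsilon)\ge(\gap^{-1}-1)\log(1/(2\varepsilon))$ for reversible chains.

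For part 2, I will use the decomposition of $\X$ into orbits $\{x,-x\}$. The restriction chains (on each two-point orbit, driven by $G$) have gap $1$. Under a JSTV-type decomposition bound, $\gap(GP_\beta G)\ge\frac14\gap(\Pbar)$ and $\gap(\frac12(P_\beta+G))\ge\frac18\gap(\Pbar)$, with the constants coming from Lemma \ref{lem:averaging} and the explicit form of $\Pbar$ in Lemma \ref{lem:transitions}. To bound $\gap(\Pbar)$, I identify the orbit space with $\{\pm1\}^{d-1}$ via $y_i=x_0x_i$. In these variables the centre coupling becomes an external field of strength $1$ on each leaf, plus interactions $K$. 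I then apply Wu's Dobrushin inequality \cite[Theorem 2.1]{Wu}: single-site updates at rate $\ge 1/(2d)$, each with influence at most $\kappa$ from the other sites, contract at rate $(1-\kappa)/(2d)$. This gives $\gap(\Pbar)\ge(1-\kappa)/(2d)$, uniformly in $\beta$ since the Dobrushin coefficients of Metropolis updates are controlled by $\kappa$ and do not depend on the field strength.

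For part 3, I will use $\tmix(\varepsilon)\le\lceil\gap^{-1}(\log\frac1{2\varepsilon}+\frac12\log\frac1{\pi_{\min}})\rceil$ with nonnegative spectrum. The needed bound is $\pi_{\min}\ge 2^{-d}e^{-\beta(2+\kappa)(d-1)}$, which follows from $|H|\le(d-1)(1+\kappa/2)$, so that $\max H-\min H\le(d-1)(2+\kappa)$.
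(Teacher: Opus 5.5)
The genuine gap is in part 2, in the bound $\gap(\Pbar)\ge(1-\kappa)/(2d)$. Its whole content is uniformity in $\beta$.

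Your justification is that the Dobrushin coefficients are \emph{controlled by $\kappa$ and do not depend on the field strength}; this is false. The coefficients in Wu's Theorem 2.1 belong to the conditional laws of $\pbar$, not to Metropolis updates. They are $c_{ij}=\sup_z|p_i(z_{-i})-p_i((z^{(j)})_{-i})|$ with $p_i=(1+\tanh(\beta h_i))/2$ and $h_i=1+\sum_{j\ne i}K_{ij}z_j$, and they depend on $\beta$ and on the field. Estimates that ignore the field give only $c_{ij}\le\tanh(\beta|K_{ij}|)\le\beta|K_{ij}|$, hence $\rho(C_\beta)\le\beta\kappa$, which is useless once $\beta\ge1/\kappa$. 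With zero field the row sums can in fact approach the number of neighbours of $i$ as $\beta\to\infty$.

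The missing idea is that the unit field saturates the conditionals. For every boundary condition $h_i(z_{-i})\ge1-\kappa>0$, so the mean-value theorem gives $c_{ij}\le\beta|K_{ij}|\sech^2(\beta(1-\kappa))$. Then $\cosh^2u\ge2u$ gives $\rho(C_\beta)\le\beta\kappa\sech^2(\beta(1-\kappa))\le\kappa/(2(1-\kappa))\le\kappa$, and the last step is exactly where $\kappa\le1/2$ is used.

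Moreover, Wu's inequality bounds $(1-\rho(C_\beta))\Var_{\pbar}(f)$ by the heat-bath form $\sum_i\E_{\pbar}[\Var_{\pbar}(f\mid Z_{-i})]$, not by a Metropolis Dirichlet form. You therefore also need the pointwise comparison $\Pbar(z,z^{(i)})=\frac1{2d}\min\{1,r\}\ge\frac1{2d}\cdot\frac r{1+r}$, with $r=\pbar(z^{(i)})/\pbar(z)$. This gives $\mathcal E_{\Pbar}(f,f)\ge\frac1{2d}\sum_i\E_{\pbar}[\Var_{\pbar}(f\mid Z_{-i})]$ after dropping the centre move.

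There are two smaller points.

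First, the restriction chains of $GP_\beta G$ and $\frac12(P_\beta+G)$ to $\{x,-x\}$ are not the restriction of $G$. Their off-diagonal entries are $\frac12P_\beta(x,x)$ and $\frac14$, so their gaps are $P_\beta(x,x)\in[\frac12,1]$ and exactly $\frac12$, not $1$. The constants $\frac14$ and $\frac18$ come from Martin--Randall's bound $\gap(T)\ge\frac12\gap(\overline T)\min_x\gap(T_{O_x})$, combined with:
\begin{itemize}
\item these restriction gaps;
\item the identities $\overline{GP_\beta G}=\Pbar$ and $\overline{\frac12(P_\beta+G)}=\frac12(\Pbar+I)$;
\item nonnegativity of the spectra, from laziness of $P_\beta$ and $G$ being an orthogonal projection, which you also need in part 3.
\end{itemize}
For $GP_\beta G$ your gap-$1$ intuition can be made exact: $\mathcal E_{GP_\beta G}(f,f)=\|f-Gf\|^2_{\pi_\beta}+\mathcal E_{P_\beta}(Gf,Gf)$ gives directly $\gap(GP_\beta G)\ge\gap(\Pbar)$.

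Second, in part 1, integrating $(d-1)\tanh(\gamma-\beta\kappa)$ over $[0,\beta]$ yields only $(d-1)[\log\cosh(\beta(1-\kappa))-\log\cosh(\beta\kappa)]$. Use that your derivative $\E_\gamma[s\tanh(\gamma s)]$ is nonnegative to discard $[0,\beta\kappa]$; this is the paper's step $\Xi_\beta(\kappa)\ge\Xi_\beta(0)$.

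With that fix, parts 1 and 3 are the paper's argument. The test function $x_0=2\ind_{\{x_0=+1\}}-1$, the interpolation in the centre field with $\gamma=\beta a$, and the standard spectral mixing bounds all match.
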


In particular, set $c_*:=\log\cosh(1/2)>0$. For every $\beta\ge1$, Theorem \ref{thm:main} implies
\begin{equation}\label{eq:separation}
\begin{aligned}
 \tmix(P_\beta,1/4)&\ge(d-1)(\log2)e^{c_*\beta(d-1)},\\
 \max\bigg\{\tmix(GP_\beta G,1/4),\tmix\left(\frac12(P_\beta+G),1/4\right)\bigg\}
 & =\mathcal O\bigl(d^2(1+\beta)\bigr),
\end{aligned}
\end{equation}
where the implicit constant is independent of $d$, $\beta$, and $K$ satisfying \eqref{eq:kappa}. The lower bound is exponential in $d$ at fixed $\beta\ge1$ and exponential in $\beta$ at fixed $d\ge3$.

At low temperatures, the leaves tend to align with the central spin.
Flipping the centre alone then raises the energy by an amount of
order $d$, so the Metropolis acceptance probability is exponentially
small in $\beta d$. This keeps the chain in one phase for a long time.
The kernel $G$ reverses all spins simultaneously with probability
$1/2$ without changing the Hamiltonian. Adding this update to form
$GP_\beta G$ or $(P_\beta+G)/2$ allows the chain to switch phases.
Rapid mixing of the projection chain then gives the polynomial
mixing time bounds.

Assume that a move of either $P_\beta$ or $G$ costs one unit
of work. Each step of $(P_\beta+G)/2$ chooses one of the two
kernels with equal probability and costs one unit, just as
a step of $P_\beta$ does. A direct implementation of
$GP_\beta G$ applies $G$, then $P_\beta$, and then $G$,
at a cost of three units per step. Under this convention,
Theorem~\ref{thm:main} gives an
$\mathcal O(d^2(1+\beta))$ upper bound on the worst-case
computational cost of either averaged kernel at
total-variation tolerance $1/4$. The corresponding cost
for $P_\beta$ is at least exponential in $\beta$ and $d$
for $\beta\ge1$.

The theorem concerns convergence of the specified dynamics and makes no claim of hardness for exact sampling.

Section \ref{sec:proofs} proves the theorem by analysing the original and projected spectral gaps and then deriving the mixing time bounds.

\section{Proofs of the main result}\label{sec:proofs}

\subsection{Projection and restriction chains; Gibbs kernel}\label{subsec:projection}

Following \citet[Section 2]{JSTV}, we recall the notions of projection and restriction chains. Let $O=(O_1,\ldots,O_k)$ be a partition of a finite state space into $k$ nonempty and pairwise disjoint blocks, and let $P$ be reversible with respect to a strictly positive distribution $\pi$. The projection chain on $\llbracket k\rrbracket$ is defined by
\begin{equation}\label{eq:generalprojection}
 \overline P(i,j):=
 \frac1{\pi(O_i)}\sum_{x\in O_i}\sum_{y\in O_j}\pi(x)P(x,y),
 \qquad \overline\pi(i):=\pi(O_i).
\end{equation}
It is reversible with stationary distribution $\overline\pi$. For $i\in\llbracket k\rrbracket$, the restriction chain on $O_i$ has transitions
\begin{equation*}
 P_{O_i}(x,y):=
 \begin{cases}
 P(x,y),&x\ne y,\\
 \displaystyle 1-\sum_{z\in O_i\setminus\{x\}}P(x,z),&x=y,
 \end{cases}
 \qquad x,y\in O_i.
\end{equation*}
Thus attempted exits from $O_i$ become holding probabilities. Detailed balance for $P$ shows that $P_{O_i}$ is reversible with stationary distribution $\pi_i(x):=\pi(x)/\pi(O_i)$, $x\in O_i$.

Writing $O_x$ for the block containing $x$, the Gibbs kernel for this partition is
\begin{equation}\label{eq:generalG}
 G_O(x,y)=
 \ind_{\{y\in O_x\}}\frac{\pi(y)}{\pi(O_x)}.
\end{equation}
It is the conditional expectation operator onto functions constant on each block, and hence is reversible and idempotent; see also \citet{ChoiWang}.

For the model \eqref{eq:model}, use the relative spins $z_i=x_0x_i$, $i\in\llbracket d-1\rrbracket$, so that $z=(z_1,\ldots,z_{d-1})\in\{-1,+1\}^{d-1}$, and write
\[
 O_z=\{(+1,z),(-1,-z)\}.
\]
The map $x\mapsto(x_0,z)$ is a bijection from $\X$ to $\{-1,+1\}\times\{-1,+1\}^{d-1}$. Define
\begin{equation*}
 S(z)=\sum_{i=1}^{d-1} z_i,\qquad
 V(z)=\sum_{i<j}K_{ij}z_iz_j.
\end{equation*}
Then $H(x)=-[S(z)+V(z)]$, and summation over the centre spin gives
\begin{equation}\label{eq:pbar}
 \mathcal Z_\beta=2\sum_{z\in\{-1,+1\}^{d-1}}e^{\beta[S(z)+V(z)]},\qquad
 \pbar(z)=\pi_\beta(O_z)=\frac{2e^{\beta[S(z)+V(z)]}}{\mathcal Z_\beta}.
\end{equation}
Each of the two states in $O_z$ has mass $\pbar(z)/2$, so \eqref{eq:generalG} is exactly the kernel $G$ in \eqref{eq:Gintro}. The restriction of $P_\beta$ to each $O_z$ is the identity kernel, since a single-spin move cannot send $x$ to $-x$ when $d\ge3$. The Gibbs kernel $G$ resamples from the conditional distribution on the orbit.

\begin{lemma}\label{lem:transitions}
For $z\in\{-1,+1\}^{d-1}$ and $i\in\llbracket d-1\rrbracket$, set
\[
 h_i(z_{-i})=1+\sum_{j\ne i}K_{ij}z_j.
\]
The off-diagonal transitions of the projection chain are
\begin{align}
 \Pbar(z,z^{(i)})
 &=\frac1{2d}\min\{1,e^{-2\beta z_i h_i(z_{-i})}\},\label{eq:projectleaf}\\
 \Pbar(z,-z)
 &=\frac1{2d}\min\{1,e^{-2\beta S(z)}\}.\label{eq:projectcentre}
\end{align}
All other off-diagonal entries vanish, and the diagonal completes each row to one.
\end{lemma}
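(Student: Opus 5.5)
We compute directly from the definition \eqref{eq:generalprojection}, identifying each orbit with its relative-spin label $z$. Fix $z$ and write $O_z=\{x,-x\}$ with $x=(+1,z)$, so $-x=(-1,-z)$. Both states have mass $\pbar(z)/2$. Hence
\[
 \Pbar(z,w)=\tfrac12\bigl[P_\beta(x,O_w)+P_\beta(-x,O_w)\bigr].
\]
Since $H(-y)=H(y)$, the kernel $P_\beta$ commutes with $R$, so $P_\beta(-x,O_w)=P_\beta(x,O_w)$. Thus $\Pbar(z,w)=P_\beta(x,O_w)$ for $w\ne z$, and it suffices to track the single-site moves from $x=(+1,z)$.

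Next we record how single-spin flips act on relative spins. Flipping leaf $i$ of $x$ sends $z$ to $z^{(i)}$ and leaves $x_0=+1$. Flipping the centre gives $(-1,z)$, whose relative spins are $z_j'=(-1)z_j=-z_j$; this state is $(-1,-(-z))$, so it lies in $O_{-z}$. Since $d\ge3$, the $d$ targets $x^{(0)},\dots,x^{(d-1)}$ lie in distinct orbits: the labels $z^{(i)}$ are distinct, none equals $z$, and $-z\ne z^{(i)}$ because $d-1\ge2$. So each $z^{(i)}$ and $-z$ receives exactly one term, which is the one-move contribution $\frac1{2d}\min\{1,e^{-\beta\Delta H}\}$. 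Every other $w\ne z$ receives nothing, so those off-diagonal entries vanish. Rows sum to one because $\Pbar$ is stochastic.

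It remains to compute the energy differences, using $H=-[S(z)+V(z)]$. For a leaf flip, $S$ changes by $-2z_i$ and $V$ changes by $-2z_i\sum_{j\ne i}K_{ij}z_j$. Hence $\Delta H=2z_i h_i(z_{-i})$, which gives \eqref{eq:projectleaf}. For a centre flip, $S(-z)=-S(z)$ and $V(-z)=V(z)$, because $V$ is quadratic. Hence $\Delta H=2S(z)$, which gives \eqref{eq:projectcentre}.

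The only delicate step is checking that the centre flip lands in $O_{-z}$ and not in $O_z$, together with the distinctness of the target orbits. Both follow from the sign bookkeeping above and the assumption $d\ge3$.
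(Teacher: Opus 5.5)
Your proposal is correct and follows essentially the same route as the paper. Both use $P_\beta(-x,-y)=P_\beta(x,y)$ to reduce the orbit-weighted average to the transitions from the single representative $(+1,z)$, compute the energy changes $2z_ih_i(z_{-i})$ and $2S(z)$ for the leaf and centre flips, and invoke $d\ge3$ to see that the $d$ target orbits are distinct; your explicit sign bookkeeping showing that the centre flip lands in $O_{-z}$ spells out what the paper states in one line.
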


\begin{proof}
Global reversal preserves the Hamiltonian and proposal probabilities, so $P_\beta(-x,-y)=P_\beta(x,y)$. The two representatives of $O_z$ therefore have the same transition probability into any specified orbit. In \eqref{eq:generalprojection}, the weighted average over these representatives equals their common probability.

Flipping leaf $i$ sends $z$ to $z^{(i)}$, and
\[
 S(z^{(i)})+V(z^{(i)})-S(z)-V(z)=-2z_i\left(1+\sum_{j\ne i}K_{ij}z_j\right).
\]
Flipping the centre sends $z$ to $-z$. Since $V(-z)=V(z)$,
$S(-z)+V(-z)-S(z)-V(z)=-2S(z)$. Substitution in \eqref{eq:P} gives the two formulas. The moves are distinct because $d\ge3$. In particular, the original dynamics is lumpable with respect to these orbits.
\end{proof}

For a reversible kernel $T$ with stationary distribution $\mu$, recall
\begin{equation}\label{eq:Dirichlet}
\begin{aligned}
 \mathcal E_T(f,f)
 &:=\langle f,(I-T)f\rangle_\mu
 =\frac12\sum_{x,y}\mu(x)T(x,y)(f(x)-f(y))^2,\\
 \gap(T)&=\inf_{\Var_\mu(f)>0}
       \frac{\mathcal E_T(f,f)}{\Var_\mu(f)}.
\end{aligned}
\end{equation}
Here $\langle f,g\rangle_\mu=\sum_x\mu(x)f(x)g(x)$ and
$\Var_\mu(f)=\mu(f^2)-\mu(f)^2$.

\subsection{An upper bound on the original spectral gap}\label{subsec:slow}

We estimate the stationary flow associated with changing the centre spin by interpolating the field in the projected distribution.

\begin{prop}\label{prop:slowgap}
The bound \eqref{eq:slowgapmain} holds. In fact, this bound only requires $\kappa<1$.
\end{prop}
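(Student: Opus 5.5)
The plan is to plug the centre-spin test function $f(x)=x_0$ into the variational formula \eqref{eq:Dirichlet}, and then to bound the resulting ratio of partition-type sums by interpolating the field in the projected weight \eqref{eq:pbar}.

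\textbf{Step 1 (Dirichlet form and variance).} Since $H(-x)=H(x)$, we have $\pi_\beta(f)=0$ and $f^2\equiv1$, so $\Var_{\pi_\beta}(f)=1$. Only flips of the centre change $f$, and each such flip changes it by $\pm2$. By \eqref{eq:P} and the computation in Lemma~\ref{lem:transitions}, the acceptance probability for the centre flip from $x\leftrightarrow(x_0,z)$ is $\min\{1,e^{-2\beta S(z)}\}$. Hence
\[
\mathcal E_{P_\beta}(f,f)=\frac12\sum_x\pi_\beta(x)\,\frac{4}{2d}\min\{1,e^{-2\beta S(z)}\}
=\frac1d\sum_z\pbar(z)\min\{1,e^{-2\beta S(z)}\}.
\]
Using $\pbar(z)\propto e^{\beta[S(z)+V(z)]}$, the product $\pbar(z)\min\{1,e^{-2\beta S(z)}\}$ is proportional to $e^{\beta[V(z)-|S(z)|]}\le e^{\beta V(z)}$. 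This gives
\[
\gap(P_\beta)\le\frac1d\,\frac{\sum_z e^{\beta V(z)}}{\sum_z e^{\beta[S(z)+V(z)]}}=\frac1d\,e^{-(F(1)-F(0))},
\qquad F(h):=\log\sum_z e^{\beta[hS(z)+V(z)]}.
\]

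\textbf{Step 2 (interpolation).} It remains to show that $F(1)-F(0)\ge(d-1)\log\cosh(\beta(1-\kappa))$.

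First, $F$ is convex, since $F''(h)=\beta^2\Var_h(S)\ge0$. It is also even, because $V(-z)=V(z)$ and $S(-z)=-S(z)$. Therefore $F'(0)=0$, and $F'\ge0$ on $[0,\infty)$. In particular $F(\kappa)\ge F(0)$.

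Next, write $F'(h)=\beta\sum_i\E_h[z_i]$, where $\E_h$ is the Gibbs measure with weight proportional to $e^{\beta[hS+V]}$. Conditioning on $z_{-i}$ gives
\[
\E_h[z_i\mid z_{-i}]=\tanh\Bigl(\beta\bigl(h+\textstyle\sum_{j\ne i}K_{ij}z_j\bigr)\Bigr)\ge\tanh(\beta(h-\kappa)),
\]
using \eqref{eq:kappa} and the monotonicity of $\tanh$. For $h\ge\kappa$ this yields $F'(h)\ge\beta(d-1)\tanh(\beta(h-\kappa))$. Integrating from $\kappa$ to $1$ then gives
\[
F(1)-F(0)\ge F(1)-F(\kappa)\ge(d-1)\log\cosh(\beta(1-\kappa)),
\]
which is \eqref{eq:slowgapmain}.

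This argument uses only $\kappa<1$ (so that $\kappa<1$ is a genuine subinterval endpoint), consistent with the remark in the proposition. The main point needing care is the handling of the interval $[0,\kappa]$. There the conditional lower bound on $\E_h[z_i\mid z_{-i}]$ may be negative, because antiferromagnetic couplings are allowed. The evenness-plus-convexity argument is what disposes of this interval, and I would double-check that step.
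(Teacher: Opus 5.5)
Your proof is correct and follows the paper's argument essentially step for step. Your test function $x_0=2\ind_D-1$ is the paper's $\ind_D$ up to an affine change, and the conditional $\tanh$ bound integrated over $[\kappa,1]$ is identical. The only difference is on $[0,\kappa]$: your evenness-plus-convexity argument for $F(\kappa)\ge F(0)$ is valid. The paper instead gets $\Xi_\beta(\kappa)\ge\Xi_\beta(0)$ directly, by symmetrising to $\sum_z e^{\beta V(z)}\cosh(\beta\kappa S(z))$ and using $\cosh\ge1$.
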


\begin{proof}
For $a\in[0,1]$, define
\[
 \Xi_\beta(a)
 :=\sum_{z\in\{-1,+1\}^{d-1}}e^{\beta[aS(z)+V(z)]}.
\]
Thus $\Xi_\beta(1)=\mathcal Z_\beta/2$. Let $D=\{x:x_0=+1\}$. Global reversal gives $\pi_\beta(D)=1/2$, and the only single-site move from $D$ to $D^c$ flips the centre. Its stationary flow is
\begin{align*}
 \mathcal F_\beta(D,D^c)
 &:=\sum_{x\in D}\pi_\beta(x)P_\beta(x,D^c)\notag\\
 &=\frac1{4d\,\Xi_\beta(1)}
   \sum_z e^{\beta[S(z)+V(z)]}\min\{1,e^{-2\beta S(z)}\}\notag\\
 &=\frac1{4d\,\Xi_\beta(1)}
   \sum_z e^{\beta V(z)-\beta|S(z)|}
 \le\frac{\Xi_\beta(0)}{4d\,\Xi_\beta(1)}.
\end{align*}
Testing \eqref{eq:Dirichlet} with $\ind_D$, whose variance is $1/4$, yields
\begin{equation}\label{eq:gapratio}
 \gap(P_\beta)\le4\mathcal F_\beta(D,D^c)
 \le\frac{\Xi_\beta(0)}{d\,\Xi_\beta(1)}.
\end{equation}

Since $V$ is even and $S$ is odd under $z\mapsto-z$,
\begin{equation}\label{eq:tiltstart}
 \Xi_\beta(\kappa)
 =\sum_z e^{\beta V(z)}\cosh(\beta\kappa S(z))
 \ge\Xi_\beta(0).
\end{equation}
Write $\E_a$ for expectation under the distribution assigning
probability
\[
\frac{e^{\beta[aS(z)+V(z)]}}{\Xi_\beta(a)}
\]
to $z\in\{-1,+1\}^{d-1}$. Fix $a\in[\kappa,1]$ and
$i\in\llbracket d-1\rrbracket$. Conditional on all spins other
than $Z_i$, the terms in $aS(z)+V(z)$ involving $z_i$ are
\[
z_i\left(a+\sum_{j\ne i}K_{ij}z_j\right).
\]
The remaining terms do not depend on $z_i$ and therefore cancel
when normalising the conditional distribution. Consequently,
the conditional probabilities of $Z_i=+1$ and $Z_i=-1$ are
proportional to
\[
\exp\!\left(\beta\left[a+\sum_{j\ne i}K_{ij}z_j\right]\right)
\quad\text{and}\quad
\exp\!\left(-\beta\left[a+\sum_{j\ne i}K_{ij}z_j\right]\right),
\]
respectively. Taking their difference after normalisation and
using $\tanh u=(e^u-e^{-u})/(e^u+e^{-u})$ gives
\[
\E_a[Z_i\mid Z_{-i}]
=\tanh\!\left(\beta\left[a+\sum_{j\ne i}K_{ij}Z_j\right]\right).
\]
By \eqref{eq:kappa}, every configuration of the other spins satisfies
\[
a+\sum_{j\ne i}K_{ij}Z_j
\ge a-\sum_{j\ne i}|K_{ij}|
\ge a-\kappa.
\]
Since $\beta\ge0$ and $\tanh$ is increasing, we obtain
\[
\E_a[Z_i\mid Z_{-i}]
\ge\tanh(\beta(a-\kappa)).
\]
Taking expectations and applying the tower property yields
\[
\E_a[Z_i]
=\E_a\!\left[\E_a[Z_i\mid Z_{-i}]\right]
\ge\tanh(\beta(a-\kappa)).
\]
Summing over the $d-1$ coordinates therefore gives
\[
\E_a[S(Z)]
=\sum_{i=1}^{d-1}\E_a[Z_i]
\ge(d-1)\tanh(\beta(a-\kappa)).
\]

Finally, keeping $\beta$ fixed and differentiating the finite sum
defining $\Xi_\beta(a)$ term by term, we find
\begin{align*}
	\frac{d}{da}\log\Xi_\beta(a)
	&=\frac{\Xi_\beta'(a)}{\Xi_\beta(a)}\\
	&=\frac{\sum_{z\in\{-1,+1\}^{d-1}}\beta S(z)
		e^{\beta[aS(z)+V(z)]}}
	{\Xi_\beta(a)}\\
	&=\beta\E_a[S(Z)]\\
	&\ge(d-1)\beta\tanh(\beta(a-\kappa)).
\end{align*}
Integrating from $\kappa$ to $1$ and using \eqref{eq:tiltstart}, we obtain
\[
 \log\frac{\Xi_\beta(1)}{\Xi_\beta(0)}
 \ge\log\frac{\Xi_\beta(1)}{\Xi_\beta(\kappa)}
 \ge (d-1)\log\cosh(\beta(1-\kappa)).
\]
Together with \eqref{eq:gapratio}, this proves the proposition, including $\beta=0$.
\end{proof}

\subsection{A lower bound on the projected spectral gap}\label{subsec:fast}

The field in \eqref{eq:pbar} favours $z_i=+1$, and
\begin{equation}\label{eq:field}
 h_i(z_{-i})\ge1-\sum_{j\ne i}|K_{ij}|\ge1-\kappa>0
\end{equation}
for every boundary configuration. We use this uniform positive field to control the dependence between conditional distributions.

For $i\in\llbracket d-1\rrbracket$, let $B_i$ be the kernel which resamples coordinate $i$ from its conditional distribution under $\pbar$ and leaves all other coordinates unchanged. Its entries are
\begin{equation*}
 B_i(z,w)=\ind_{\{w_{-i}=z_{-i}\}}
 \frac{e^{\beta w_i h_i(z_{-i})}}
      {2\cosh(\beta h_i(z_{-i}))}.
\end{equation*}
Define the random-scan heat-bath kernel
\begin{equation*}
 Q_\beta=\frac1{d-1}\sum_{i=1}^{d-1}B_i.
\end{equation*}
Both $B_i$ and $Q_\beta$ are $\pbar$-reversible. For functions, $B_if=\E_{\pbar}[f\mid Z_{-i}]$. In particular, $B_i$ is an orthogonal projection in $\ell^2(\pbar)$ and $Q_\beta$ has nonnegative spectrum. Its off-diagonal entries are
\begin{equation}\label{eq:Qoff}
 Q_\beta(z,z^{(i)})=
 \frac1{(d-1)[1+e^{2\beta z_i h_i(z_{-i})}]},
 \qquad
 Q_\beta(z,w)=0\quad\text{if }w\ne z,z^{(1)},\ldots,z^{(d-1)}.
\end{equation}

\begin{lemma}\label{lem:Qgap}
Under \eqref{eq:kappa}, for every $\beta\ge0$,
\begin{equation}\label{eq:Qgap}
 \gap(Q_\beta)\ge\frac{1-\kappa}{d-1}\ge\frac1{2(d-1)}.
\end{equation}
\end{lemma}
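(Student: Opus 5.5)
We use a Dobrushin-type contraction argument, namely \citet[Theorem 2.1]{Wu}, which is the tool the introduction flags. For the random-scan heat-bath kernel of a product-space Gibbs measure, Wu's inequality states the following. Let the Dobrushin interdependence matrix $C=(c_{ij})$ have entries
\[
 c_{ij}:=\sup\bigl\{\|\pbar(\cdot\mid z_{-i})-\pbar(\cdot\mid w_{-i})\|_{\tv}:z_{-i},w_{-i}\text{ differ only at }j\bigr\}.
\]
If $\|C\|\le\rho<1$ in the appropriate matrix norm (row sums, column sums, or the $\ell^2$ operator norm of the symmetrised matrix), then the random-scan Gibbs sampler satisfies
\[
 \gap(Q_\beta)\ge\frac{1-\rho}{d-1}.
\]
It therefore suffices to show that $\max_i\sum_{j\ne i}c_{ij}\le\kappa$ and $\max_j\sum_{i\ne j}c_{ij}\le\kappa$. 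By the symmetry of $K$, both reduce to one computation.

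The key step is a one-dimensional estimate. The conditional law of $Z_i$ given $z_{-i}$ is the $\pm1$ law with mean $\tanh(\beta h_i(z_{-i}))$, so its total-variation distance to another such law equals
\[
 \tfrac12\bigl|\tanh(\beta h_i(z_{-i}))-\tanh(\beta h_i(w_{-i}))\bigr|.
\]
Flipping $z_j$ changes $h_i$ by $2|K_{ij}|$, and both values of $h_i$ lie in $[1-\kappa,\infty)$ by \eqref{eq:field}. The mean value theorem then gives
\[
 c_{ij}\le|K_{ij}|\,\beta\sup_{u\ge 1-\kappa}\sech^2(\beta u)=\beta|K_{ij}|\sech^2(\beta(1-\kappa)).
\]
The issue is that $\beta\sech^2(\beta(1-\kappa))$ is not bounded by $1$ uniformly in $\beta$; its maximum is a constant multiple of $1/(1-\kappa)$.

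The more careful route, and the main obstacle, is to bound the $\tanh$ difference directly rather than through the derivative. For $u\ge v\ge 1-\kappa>0$ with $u-v=2|K_{ij}|\le 2\kappa$, we need to show
\[
 \tfrac12(\tanh(\beta u)-\tanh(\beta v))\le|K_{ij}|.
\]
First try the crude bound $\tanh(\beta u)-\tanh(\beta v)\le\min\{1-\tanh(\beta v),\ \beta(u-v)\}$. Because $\tanh$ is concave on $[0,\infty)$ and $1$-Lipschitz in the sense that $|\tanh a-\tanh b|\le|a-b|$, we get
\[
 \tfrac12|\tanh(\beta u)-\tanh(\beta v)|\le\tfrac12\min\{2\beta|K_{ij}|,\,1\}.
\]
This alone still depends on $\beta$. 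To remove the dependence, use the positivity of the field. For $v>0$ and fixed $\delta=u-v$, the function
\[
 \beta\mapsto\tanh(\beta(v+\delta))-\tanh(\beta v)
\]
is maximised at some $\beta^*$, and by the identity $\tanh a-\tanh b=\sinh(a-b)/(\cosh a\cosh b)$ the maximum is at most $\tanh(\delta/(2v+\delta))$-type quantities of order $\delta/(2v)$. With $v\ge 1-\kappa\ge 1/2$, this gives $c_{ij}\le|K_{ij}|/(2v)\cdot(\text{const})$. I expect the sharp form to be
\[
 c_{ij}\le\frac{|K_{ij}|}{1-\kappa}\cdot\tfrac{1}{2}\cdot 2,
\]
giving $\sum_j c_{ij}\le\kappa$ after using the constraint. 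This elementary inequality, which yields a Dobrushin constant independent of $\beta$, is the step to verify carefully.

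Once $\rho\le\kappa$ is established, Wu's inequality gives $\gap(Q_\beta)\ge(1-\kappa)/(d-1)$. The bound $\kappa\le1/2$ then gives the second inequality in \eqref{eq:Qgap}. The case $\beta=0$ is trivial, since $Q_0$ is then the product heat bath with gap exactly $1/(d-1)$.
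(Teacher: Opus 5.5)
Your framework matches the paper's: Wu's Theorem 2.1 with total-variation Dobrushin coefficients, plus the mean-value bound $c_{ij}\le\beta|K_{ij}|\sech^2(\beta(1-\kappa))$. However, you abandon it exactly where it already works. Your claim that $\beta\sech^2(\beta(1-\kappa))$ ``is not bounded by $1$ uniformly in $\beta$'' is false under \eqref{eq:kappa}. Since $\sinh t\ge t$ for $t\ge0$, one has $\cosh^2t=1+\sinh^2t\ge1+t^2\ge2t$, and hence
\[
 \beta\sech^2(\beta(1-\kappa))\le\frac1{2(1-\kappa)}\le1,
\]
because $1-\kappa\ge1/2$. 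Numerically, $\max_{t\ge0}t\sech^2t\approx0.448<1/2$. Summing your own mean-value bound over $j$ then gives $\max_i\sum_jc_{ij}\le\kappa/(2(1-\kappa))\le\kappa$. So $\rho(C_\beta)\le\kappa$, and Wu's inequality yields $\gap(Q_\beta)\ge(1-\kappa)/(d-1)$. This is precisely the paper's proof.

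As written, though, your proposal leaves the $\beta$-uniform Dobrushin bound unproved, and this is the whole content of the lemma. The ``$\tanh(\delta/(2v+\delta))$-type quantities'' and the unspecified constant are heuristics. Your ``expected sharp form'' $c_{ij}\le|K_{ij}|/(1-\kappa)$ would give only $\sum_jc_{ij}\le\kappa/(1-\kappa)$. That is not $\le\kappa$ for any $\kappa>0$, and it equals $1$ at $\kappa=1/2$, where Wu's inequality becomes vacuous. So your claim that it gives $\sum_jc_{ij}\le\kappa$ is arithmetically wrong. The missing idea is the elementary estimate $\cosh^2t\ge2t$, equivalently $\beta\sech^2(\beta v)\le1/(2v)$ for $v>0$. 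It supplies exactly the factor $1/2$ your sketch loses and closes the argument along the route you had already set up.
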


\begin{proof}
Put
\[
 p_i(z_{-i}):=\pbar(Z_i=+1\mid Z_{-i}=z_{-i})
 =\frac{1+\tanh(\beta h_i(z_{-i}))}{2}.
\]
For distinct $i,j\in\llbracket d-1\rrbracket$, define the conditional influence
\begin{equation}\label{eq:cij}
 c_{ij}^{(\beta)}
 :=\sup_z\left|p_i(z_{-i})-p_i((z^{(j)})_{-i})\right|,
 \qquad c_{ii}^{(\beta)}=0.
\end{equation}
These are precisely the coefficients in \citet[equation (2.3)]{Wu} when the single-spin metric is $\delta(s,t)=\ind_{\{s\ne t\}}$. For this metric, the Wasserstein distance equals total-variation distance, and for two Bernoulli laws it is the absolute difference of their probabilities of $+1$.

The two local fields in \eqref{eq:cij} differ by $2K_{ij}z_j$ and both belong to $[1-\kappa,\infty)$. Since the derivative of
$u\mapsto(1+\tanh(\beta u))/2$ is $(\beta/2)\sech^2(\beta u)$, the mean-value theorem and \eqref{eq:field} imply
\begin{equation*}
 c_{ij}^{(\beta)}
 \le\beta|K_{ij}|\sech^2(\beta(1-\kappa)).
\end{equation*}
Let $C_\beta=(c_{ij}^{(\beta)})$ and
$\alpha_\beta=\max_i\sum_jc_{ij}^{(\beta)}$. For $u\ge0$,
$\sinh u=\int_0^u\cosh v\,dv\ge u$, so
\[
 \cosh^2u=1+\sinh^2u\ge1+u^2\ge2u.
\]
Consequently, for $\beta>0$,
\begin{equation}\label{eq:alpha}
 \rho(C_\beta)\le\alpha_\beta
 \le\beta\kappa\sech^2(\beta(1-\kappa))
 \le\frac{\kappa}{2(1-\kappa)}\le\kappa\le\frac12.
\end{equation}
For $\beta=0$, $C_\beta=0$, so the same conclusion holds.

By \citet[Theorem 2.1, equation (2.4)]{Wu},
\begin{equation}\label{eq:Wu}
 (1-\rho(C_\beta))\Var_{\pbar}(f)
 \le\sum_{i=1}^{d-1}\E_{\pbar}
       [\Var_{\pbar}(f\mid Z_{-i})].
\end{equation}
The moment assumptions in that theorem are automatic on this finite space. Since $B_i$ is conditional expectation,
\begin{align*}
 \E_{\pbar}[\Var_{\pbar}(f\mid Z_{-i})]
 &=\pbar(f^2)-\pbar((B_if)^2)\\
 &=\langle f,(I-B_i)f\rangle_{\pbar}.
\end{align*}
Summing over $i$, the right-hand side of \eqref{eq:Wu} is
$(d-1)\mathcal E_{Q_\beta}(f,f)$. Combining \eqref{eq:alpha}, \eqref{eq:Wu}, and the variational formula \eqref{eq:Dirichlet} proves \eqref{eq:Qgap}. Equivalently, Wu's continuous-time generator is $(d-1)(Q_\beta-I)$, which explains the factor $(d-1)$.
\end{proof}

\begin{prop}\label{prop:projectgap}
For every $\beta\ge0$,
\begin{equation*}
 \gap(\Pbar)\ge\frac{1-\kappa}{2d}\ge\frac1{4d}.
\end{equation*}
\end{prop}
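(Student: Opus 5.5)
We compare Dirichlet forms of $\Pbar$ and $Q_\beta$. Both kernels are $\pbar$-reversible on $\{-1,+1\}^{d-1}$, so the variational formula \eqref{eq:Dirichlet} reduces the claim to the pointwise comparison
\[
 \mathcal E_{\Pbar}(f,f)\ge\frac{d-1}{2d}\,\mathcal E_{Q_\beta}(f,f)
\]
for every $f$. Combined with Lemma~\ref{lem:Qgap}, this gives
\[
 \gap(\Pbar)\ge\frac{d-1}{2d}\cdot\frac{1-\kappa}{d-1}=\frac{1-\kappa}{2d}\ge\frac1{4d},
\]
using $\kappa\le1/2$.

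To establish the comparison, we first discard the centre moves $z\to-z$ from $\mathcal E_{\Pbar}$. They contribute nonnegative terms, so dropping them only lowers the Dirichlet form. Both Dirichlet forms are then sums over the same edges $\{z,z^{(i)}\}$ with the common weight $\pbar(z)(f(z)-f(z^{(i)}))^2$, and it suffices to compare the transition probabilities edgewise. Write $u=2\beta z_ih_i(z_{-i})$. By Lemma~\ref{lem:transitions} and \eqref{eq:Qoff},
\[
 \Pbar(z,z^{(i)})=\frac{1}{2d}\min\{1,e^{-u}\},\qquad
 Q_\beta(z,z^{(i)})=\frac{1}{(d-1)(1+e^{u})}.
\]
The elementary inequality $\min\{1,e^{-u}\}\ge 1/(1+e^{u})$ holds for all real $u$: if $u\le0$ the left side is $1$, and if $u>0$ then $e^{-u}\ge 1/(1+e^u)$ because $1+e^u\ge e^u$. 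Consequently
\[
 \Pbar(z,z^{(i)})\ge\frac{d-1}{2d}\,Q_\beta(z,z^{(i)})
\]
for every edge, which yields the Dirichlet form comparison.

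No step here is a real obstacle. The only care needed is bookkeeping: $\Pbar$ selects a leaf with probability $1/(2d)$ per leaf, while $Q_\beta$ selects one with probability $1/(d-1)$, and this mismatch produces the factor $(d-1)/(2d)$. The same Metropolis-versus-heat-bath comparison holds for every sign of the local field, so no uniformity issue arises in $\beta$ or $K$. The uniform-in-$\beta$ input comes entirely from Lemma~\ref{lem:Qgap}.
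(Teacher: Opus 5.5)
Your proof is correct and follows essentially the same route as the paper. The paper also notes that the centre move $z\to-z$ is absent from $Q_\beta$, compares edgewise via $\min\{1,r\}\ge r/(1+r)$ with $r=\pbar(z^{(i)})/\pbar(z)=e^{-u}$ (which is exactly your inequality) to get $\Pbar(z,z^{(i)})\ge\frac{d-1}{2d}Q_\beta(z,z^{(i)})$, and then concludes from Lemma~\ref{lem:Qgap}.
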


\begin{proof}
For $w=z^{(i)}$, put $r=\pbar(w)/\pbar(z)$. Equations \eqref{eq:projectleaf} and \eqref{eq:Qoff} give
\[
 \Pbar(z,w)=\frac1{2d}\min\{1,r\},\qquad
 Q_\beta(z,w)=\frac1{d-1}\frac r{1+r}.
\]
Since $\min\{1,r\}\ge r/(1+r)$, we have
\begin{equation}\label{eq:comparison}
 \Pbar(z,w)\ge\frac{d-1}{2d}Q_\beta(z,w)\qquad(z\ne w).
\end{equation}
Note that, if $w$ is not a single-coordinate flip, the right-hand side vanishes. In particular, $Q_\beta(z,-z)=0$ because $d\ge3$, whereas \eqref{eq:projectcentre} supplies an additional move for $\Pbar$. Both chains have stationary distribution $\pbar$, so \eqref{eq:comparison} implies
\[
 \mathcal E_{\Pbar}(f,f)\ge\frac{d-1}{2d}\mathcal E_{Q_\beta}(f,f).
\]
The claim follows from Lemma \ref{lem:Qgap} and \eqref{eq:Dirichlet}.
\end{proof}

\subsection{Spectral gaps of the averaged kernels}\label{subsec:averaging}

\begin{lemma}\label{lem:averaging}
Both averaged kernels $GP_\beta G$ and $\tfrac12(P_\beta+G)$ are irreducible, $\pi_\beta$-reversible, positive semidefinite, and satisfy
\begin{equation*}
 \gap(GP_\beta G)\ge\frac14\gap(\Pbar),\qquad
 \gap(\tfrac12(P_\beta+G))\ge\frac18\gap(\Pbar).
\end{equation*}
\end{lemma}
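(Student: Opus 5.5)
The plan is to treat $G$ as the orthogonal projection $\Pi$ in $\ell^2(\pi_\beta)$ onto functions constant on orbits. Write $f=\Pi f+(I-\Pi)f=:g+h$, so that $g$ is even and $h$ is odd under $x\mapsto -x$. Reversibility and irreducibility are routine. $G$ is $\pi_\beta$-reversible because it is the Gibbs kernel \eqref{eq:generalG}. Hence $GP_\beta G$ and $\frac12(P_\beta+G)$ are self-adjoint in $\ell^2(\pi_\beta)$ and stochastic. Irreducibility holds because $P_\beta$ is irreducible, $G\ge\frac12 I$ entrywise, and $G$ has positive diagonal. Positive semidefiniteness follows from $\langle f,GP_\beta Gf\rangle=\langle Gf,P_\beta Gf\rangle\ge0$, since $P_\beta$ is PSD by laziness, and from the fact that $G$ is an orthogonal projection, so $\frac12(P_\beta+G)$ is a sum of PSD operators.

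The key structural fact is that $P_\beta$ commutes with $R$, since $P_\beta(-x,-y)=P_\beta(x,y)$ as in the proof of Lemma \ref{lem:transitions}. Therefore $P_\beta$ preserves the even and odd subspaces, and $\mathcal E_{P_\beta}(f,f)=\mathcal E_{P_\beta}(g,g)+\mathcal E_{P_\beta}(h,h)$. An even function $g$ is identified with a function $\bar g$ on $\{-1,+1\}^{d-1}$. By lumpability, $\mathcal E_{P_\beta}(g,g)=\mathcal E_{\Pbar}(\bar g,\bar g)\ge\gap(\Pbar)\Var_{\pbar}(\bar g)=\gap(\Pbar)\Var_{\pi_\beta}(g)$. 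One could alternatively use the decomposition theorem of \citet{JSTV}, whose restriction chains are identity kernels here, but the direct approach seems cleaner.

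For $GP_\beta G$, compute $\mathcal E_{GP_\beta G}(f,f)=\|f\|^2-\langle g,P_\beta g\rangle=\|h\|^2+\mathcal E_{P_\beta}(g,g)$. Since $\Var(f)=\Var(g)+\|h\|^2$, because $h$ has mean zero, this gives $\mathcal E\ge\min\{1,\gap(\Pbar)\}\Var(f)=\gap(\Pbar)\Var(f)$. This is even better than the stated bound $\frac14\gap(\Pbar)$, so the stated bound follows. For $\frac12(P_\beta+G)$, we have $\mathcal E=\frac12\mathcal E_{P_\beta}(f,f)+\frac12\langle f,(I-G)f\rangle=\frac12[\mathcal E_{P_\beta}(g,g)+\mathcal E_{P_\beta}(h,h)]+\frac12\|h\|^2$. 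Dropping the nonnegative term $\mathcal E_{P_\beta}(h,h)$ leaves $\ge\frac12\gap(\Pbar)\Var(g)+\frac12\|h\|^2\ge\frac12\gap(\Pbar)\Var(f)$, which again implies the stated bound $\frac18\gap(\Pbar)$.

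The main point to check carefully is the cross-term cancellation $\mathcal E_{P_\beta}(g,h)=0$, which comes from $R$-invariance: $P_\beta h$ is odd, so it is orthogonal to $g$. A second check is the identification of the Dirichlet form of $P_\beta$ on even functions with that of $\Pbar$. This is the general identity for projection chains applied to block-constant functions, and it needs no lumpability. Using $\gap(\Pbar)\le 1$ (as with any stochastic matrix) then gives the constants. If the factors $\frac14$ and $\frac18$ in the paper come from a cruder general comparison, the sharper argument above still implies them.
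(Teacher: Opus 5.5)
Your proof is correct, but it takes a different route from the paper's.

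\textbf{The paper's route.} The paper treats each averaged kernel $T$ as a decomposable chain over the orbits $\{x,-x\}$. It computes the two-state restriction chains and shows their gaps are at least $\frac12$. It identifies the projections as $\overline{GP_\beta G}=\Pbar$ and $\overline{\frac12(P_\beta+G)}=\frac12(\Pbar+I)$, citing Choi--Lim--Wang. It then applies the Martin--Randall bound $\gap(T)\ge\frac12\gap(\overline T)\min_x\gap(T_{O_x})$. The factors $\frac12\cdot\frac12$, plus an extra $\frac12$ from the lazy projection of the additive kernel, produce the constants $\frac14$ and $\frac18$.

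\textbf{Your route.} You use the exact commutation $P_\beta R=RP_\beta$, which block-diagonalises $P_\beta$ into even and odd parts. On even functions the Dirichlet form of $P_\beta$ is exactly that of $\Pbar$, and $G$ annihilates odd functions. This gives $\gap(GP_\beta G)\ge\gap(\Pbar)$ and $\gap(\frac12(P_\beta+G))\ge\frac12\gap(\Pbar)$. Taking $f$ even in your own identities shows both bounds are equalities. So the paper's constants lose a factor of $4$, which would carry over to its mixing-time bounds.

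\textbf{Comparison.} Your argument is more elementary, self-contained and sharp. It does, however, rely on exact equivariance of $P_\beta$ under the group action. The paper's decomposition template only needs lower bounds on restriction and projection gaps. That makes it the more robust tool when the kernel does not commute with the averaging operator, at the price of constant factors.

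\textbf{One small correction.} The justification ``$\gap(\Pbar)\le1$ as with any stochastic matrix'' is false in general: the deterministic flip on two states is reversible with second eigenvalue $-1$, so its gap is $2$. In your setting the claim does hold, because $\Pbar(z,z)=P_\beta(x,x)\ge\frac12$ makes $\Pbar$ lazy, so its spectrum is nonnegative. Even without this, every reversible stochastic matrix has gap at most $2$. That already gives $\min\{1,\gap(\Pbar)\}\ge\frac12\gap(\Pbar)$, which is enough for the stated constants $\frac14$ and $\frac18$.
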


\begin{proof}
Laziness makes $P_\beta$ positive semidefinite, and $G$ is an orthogonal projection in $\ell^2(\pi_\beta)$. Thus
\begin{align*}
 \langle f,GP_\beta Gf\rangle_{\pi_\beta}
 &=\langle Gf,P_\beta Gf\rangle_{\pi_\beta}\ge0,\\
 \langle f,\tfrac12(P_\beta+G)f\rangle_{\pi_\beta}
 &=\tfrac12\langle f,P_\beta f\rangle_{\pi_\beta}
   +\tfrac12\langle Gf,Gf\rangle_{\pi_\beta}\ge0.
\end{align*}
Both kernels are reversible. They are irreducible because, entrywise, $GP_\beta G\ge P_\beta/4$ and $\frac12(P_\beta+G)\ge P_\beta/2$.

For either averaged kernel $T$, its restriction to $O_x=\{x,-x\}$, in this order, is
\[
 T_{O_x}=\begin{pmatrix}1-a&a\\ b&1-b\end{pmatrix},\qquad
 a=T(x,-x),\quad b=T(-x,x).
\]
Its eigenvalues are $1$ and $1-a-b$, so $\gap(T_{O_x})=a+b$. Since $P_\beta(x,-x)=P_\beta(-x,x)=0$ for $d\ge3$ and $P_\beta(-x,-x)=P_\beta(x,x)$, we have
\begin{align*}
 (GP_\beta G)(x,-x)
 &=(GP_\beta G)(-x,x)\\
 &=\tfrac14\bigl(P_\beta(x,x)+P_\beta(-x,-x)\bigr)
 =\tfrac12P_\beta(x,x)\ge\tfrac14.
\end{align*}
For the additive kernel, both off-diagonal entries of the restriction are exactly $1/4$, because $G(x,-x)=G(-x,x)=1/2$. Consequently,
\begin{equation}\label{eq:restrictiongaps}
 \gap\bigl((GP_\beta G)_{O_x}\bigr)\ge\frac12,\qquad
 \gap\bigl((\tfrac12(P_\beta+G))_{O_x}\bigr)=\frac12.
\end{equation}
In both cases $1/4\le a=b\le1/2$, so these restrictions are irreducible and have nonnegative spectra.

By \citet[Proposition 2.5]{ChoiLimWang}, Gibbs averaging preserves the projection chain:
\begin{equation}\label{eq:averagedprojections}
 \overline{GP_\beta G}=\Pbar,\qquad
 \overline{\tfrac12(P_\beta+G)}=\tfrac12(\Pbar+I).
\end{equation}
The second identity follows directly from linearity of the projection construction and $\overline G=I$. Hence
\[
 \gap\bigl(\overline{GP_\beta G}\bigr)=\gap(\Pbar),\qquad
 \gap\bigl(\overline{\tfrac12(P_\beta+G)}\bigr)
 =\tfrac12\gap(\Pbar).
\]
The projection $\Pbar$ is lazy, since $P_\beta$ is lazy, so both projected kernels also have nonnegative spectra.

For either averaged kernel $T$, \citet[Theorem 3.2]{MartinRandall} gives
\begin{equation*}
 \gap(T)\ge\frac12\gap(\overline T)
       \min_{x\in\X}\gap(T_{O_x}).
\end{equation*}
The absolute spectral gap used in that theorem agrees with our definition of $\gap$ for the kernels considered here, since their spectra are nonnegative. Using \eqref{eq:restrictiongaps}--\eqref{eq:averagedprojections}, we obtain
\begin{align*}
 \gap(GP_\beta G)
 &\ge\frac12\gap(\Pbar)\cdot\frac12
  =\frac14\gap(\Pbar),\\
 \gap(\tfrac12(P_\beta+G))
 &\ge\frac12\cdot\frac12\gap(\Pbar)\cdot\frac12
  =\frac18\gap(\Pbar).
\end{align*}
This proves the lemma.
\end{proof}

\subsection{From spectral gaps to mixing times}\label{subsec:mixing}

We finish the proof by recalling the relevant spectral estimates; see also \citet[Chapter 12]{LPW}. For an ergodic reversible kernel $T$ with stationary distribution $\mu$, nonnegative spectrum, and gap $\gamma$, the $\ell^2(\mu)$ contraction and Cauchy--Schwarz give
\begin{equation*}
 \|T^t(x,\cdot)-\mu\|_{\tv}
 \le\frac12\sqrt{\mu(x)^{-1}-1}\,(1-\gamma)^t
 \le\frac1{2\sqrt{\mu(x)}}e^{-\gamma t}.
\end{equation*}
Writing $\mu_{\min}=\min_x\mu(x)$, it follows that
\begin{equation}\label{eq:mixuppergeneral}
 \tmix(T,\varepsilon)
 \le\left\lceil\gamma^{-1}
 \left(\log\frac1{2\varepsilon}+\frac12\log\frac1{\mu_{\min}}\right)
 \right\rceil.
\end{equation}

For the lower bound, suppose $0<\gamma<1$ and choose a nonconstant eigenfunction $f$ with $Tf=(1-\gamma)f$, $\mu(f)=0$, and $\|f\|_\infty=1$. At a state attaining $|f(x)|=1$, the dual characterisation of total variation yields
\[
 \|T^t(x,\cdot)-\mu\|_{\tv}\ge\frac12(1-\gamma)^t.
\]
Since $-\log(1-\gamma)\le\gamma/(1-\gamma)$,
\begin{equation}\label{eq:mixlowergeneral}
 \tmix(T,\varepsilon)
 \ge\frac{\log(1/(2\varepsilon))}{-\log(1-\gamma)}
 \ge(\gamma^{-1}-1)\log\frac1{2\varepsilon}.
\end{equation}

\begin{proof}[Proof of Theorem \ref{thm:main}]
Proposition \ref{prop:slowgap} proves \eqref{eq:slowgapmain}. The kernel $P_\beta$ is irreducible and lazy, and its gap is at most $1/d<1$. Applying \eqref{eq:mixlowergeneral} proves \eqref{eq:slowmixmain}. Proposition \ref{prop:projectgap} and Lemma \ref{lem:averaging} prove \eqref{eq:fastgapM}--\eqref{eq:fastgapA}.

To bound the minimum stationary mass, note that
\[
 \sum_{i<j}|K_{ij}|\le\frac{\kappa (d-1)}{2},\qquad
 |H(x)|\le(d-1)\left(1+\frac\kappa2\right).
\]
Therefore $\max_x H(x)-\min_x H(x)\le(2+\kappa)(d-1)$ and
\[
 \pi_{\beta,\min}\ge2^{-d}e^{-\beta(2+\kappa)(d-1)},\qquad
 \log\frac1{\pi_{\beta,\min}}
 \le d\log2+\beta(2+\kappa)(d-1).
\]
By Lemma \ref{lem:averaging}, both averaged kernels are irreducible and have nonnegative spectrum. Substituting the lower bounds on their spectral gaps and the
bound on $\pi_{\beta,\min}$ into \eqref{eq:mixuppergeneral} gives \eqref{eq:fastmixM}--\eqref{eq:fastmixA}.

Finally, for $a\ge0$, the function $b\mapsto\log\cosh(ab)$ is convex and vanishes at zero. Thus, for $\beta\ge1$,
\[
 \log\cosh(\beta(1-\kappa))
 \ge\beta\log\cosh(1-\kappa)
 \ge\beta\log\cosh(1/2)=\beta c_*.
\]
Since $dA-1\ge(d-1)A$ for $A\ge1$, \eqref{eq:slowmixmain} at $\varepsilon=1/4$ yields the first inequality in \eqref{eq:separation}. The remaining assertion follows from \eqref{eq:fastmixM}--\eqref{eq:fastmixA} and $1-\kappa\ge1/2$.
\end{proof}

\section*{Acknowledgements}

Michael Choi acknowledges financial support from the National University
of Singapore through projects A-0000178-02-00 and A-8003574-00-00.

During preparation of this manuscript, the authors used OpenAI's
ChatGPT (GPT-6.0 Astra) to assist with literature searches, checks of
mathematical arguments, language and \LaTeX{} editing. The author developed the ideas, manuscript, reviewed and edited all outputs, and take full responsibility for the
manuscript.
\begingroup
\small
\raggedright
\bibliographystyle{unsrtnat}
\bibliography{torpid-to-rapid_v4}

\begin{thebibliography}{9}
\providecommand{\natexlab}[1]{#1}
\providecommand{\url}[1]{\texttt{#1}}
\expandafter\ifx\csname urlstyle\endcsname\relax
  \providecommand{\doi}[1]{doi: #1}\else
  \providecommand{\doi}{doi: \begingroup \urlstyle{rm}\Url}\fi

\bibitem[Niepert(2012)]{Niepert}
Mathias Niepert.
\newblock {Markov} chains on orbits of permutation groups.
\newblock In \emph{Proceedings of the 28th Conference on Uncertainty in
  Artificial Intelligence}, UAI '12, pages 624--633, Catalina Island,
  California, USA, 2012. AUAI Press.
\newblock URL \url{https://arxiv.org/abs/1206.5396}.

\bibitem[Choi and Wang(2025)]{ChoiWang}
Michael C.~H. Choi and Youjia Wang.
\newblock Group-averaged {Markov} chains: mixing improvement, 2025.
\newblock URL \url{https://arxiv.org/abs/2509.02996}.
\newblock Preprint, arXiv:2509.02996.

\bibitem[Choi et~al.(2026)Choi, Lim, and Wang]{ChoiLimWang}
Michael C.~H. Choi, Ryan J.~Y. Lim, and Youjia Wang.
\newblock Group-averaged {Markov} chains {II}: tuning of group action in finite
  state space, 2026.
\newblock URL \url{https://arxiv.org/abs/2512.13067v2}.
\newblock Preprint, arXiv:2512.13067v2.

\bibitem[Jerrum et~al.(2004)Jerrum, Son, Tetali, and Vigoda]{JSTV}
Mark Jerrum, Jung-Bae Son, Prasad Tetali, and Eric Vigoda.
\newblock Elementary bounds on {Poincar\'e} and log-{Sobolev} constants for
  decomposable {Markov} chains.
\newblock \emph{The Annals of Applied Probability}, 14\penalty0 (4):\penalty0
  1741--1765, 2004.
\newblock \doi{10.1214/105051604000000639}.

\bibitem[Levin et~al.(2010)Levin, Luczak, and Peres]{LLP}
David~A. Levin, Malwina~J. Luczak, and Yuval Peres.
\newblock {Glauber} dynamics for the mean-field {Ising} model: cut-off,
  critical power law, and metastability.
\newblock \emph{Probability Theory and Related Fields}, 146:\penalty0 223--265,
  2010.
\newblock \doi{10.1007/s00440-008-0189-z}.

\bibitem[Ding et~al.(2009)Ding, Lubetzky, and Peres]{DLP}
Jian Ding, Eyal Lubetzky, and Yuval Peres.
\newblock Censored {Glauber} dynamics for the mean field {Ising} model.
\newblock \emph{Journal of Statistical Physics}, 137:\penalty0 407--458, 2009.
\newblock \doi{10.1007/s10955-009-9859-1}.

\bibitem[Wu(2006)]{Wu}
Liming Wu.
\newblock {Poincar\'e} and transportation inequalities for {Gibbs} measures
  under the {Dobrushin} uniqueness condition.
\newblock \emph{The Annals of Probability}, 34\penalty0 (5):\penalty0
  1960--1989, 2006.
\newblock \doi{10.1214/009117906000000368}.

\bibitem[Martin and Randall(2006)]{MartinRandall}
Russell Martin and Dana Randall.
\newblock Disjoint decomposition of {M}arkov chains and sampling circuits in
  {C}ayley graphs.
\newblock \emph{Combin. Probab. Comput.}, 15\penalty0 (3):\penalty0 411--448,
  2006.
\newblock ISSN 0963-5483,1469-2163.
\newblock \doi{10.1017/S0963548305007352}.
\newblock URL
  \url{https://doi-org.libproxy1.nus.edu.sg/10.1017/S0963548305007352}.

\bibitem[Levin and Peres(2017)]{LPW}
David~A. Levin and Yuval Peres.
\newblock \emph{{Markov} Chains and Mixing Times}.
\newblock American Mathematical Society, Providence, RI, second edition, 2017.
\newblock URL \url{https://pages.uoregon.edu/dlevin/MARKOV/}.
\newblock With contributions by Elizabeth L. Wilmer.

\end{thebibliography}
\endgroup
\end{document}